\newtheorem{theorem}{Theorem}[section]
\newtheorem{corollary}[theorem]{Corollary}
\newtheorem{proposition}[theorem]{Proposition}
\theoremstyle{remark}
\newtheorem{remark}[theorem]{Remark}
\numberwithin{equation}{section}
\begin{document}

\author{Konstantinos A. Draziotis}
\title[Practical solution of some families of quartic diophantine]{Practical solution of some families of quartic and sextic diophantine hyperelliptic equations}
\address {K. A. Draziotis \\
Department of Informatics\\
Aristotle University of Thessaloniki \\
54124 Thessaloniki, Greece}
\email{drazioti@csd.auth.gr}
\subjclass[2010]{11Y50, 11D41}

\keywords{Number Theory, Diophantine Equations, hyperelliptic curves, Runge's method.}

\maketitle

\begin{abstract}
Using elementary number theory we study Diophantine equations over the rational integers of the following form,
$y^2=(x+a)(x+a+k)(x+b)(x+b+k)$, $y^2=c^2x^4+ax^2+b$ and $y^2=(x^2-1)(x^2-\alpha^2)(x^2-(\alpha+1)^2).$   We express their integer solutions by means of the divisors of the discriminant of $f(x),$ where $y^2=f(x)$.
\end{abstract}

\section{Introduction}
Let $f(x)\in \Bbb{Z}[x]$ be a monic polynomial which is not a square. We consider the hyperelliptic curve, 
\begin{equation}
y^{2}=f(x), \ 2|\deg f\ {\rm{and}}\ \deg{f}\ge 4. \label{eq}
\end{equation}
One way to study the integer points of curve (\ref{eq}) is to use the so called Runge's method \cite{Ayad,GH,Runge,sprind} (see Appendix A for a brief summary of this method). In fact, Runge proved the finiteness of the integer points of equation (\ref{eq}) in 1887 \cite{Runge}\footnote{A  more general result was proved, where our curve is a special case. See Appendix C for the general curves that satisfy Runge's condition.}.  
%This method provides uniform upper bounds for the size of the integer points. Sometimes, it can be used to practically solve such equations. 

In \cite{HiSt,Walsh,Le}, using Runge's method with a combination of an effective version of Eisenstein theorem  \cite{Sch}, the authors provided  a uniform upper bound for the size of their integral points on curves of the form  $(\ref{eq}).$ 

In \cite{Tengely}, the author considers more general equations of the form
$$F(x)=G(y),\ F,G\ {\rm monic}, \ F(x)-G(y)\ {\rm{irreducible \ }} \text{in}\ {\mathbb{Q}}[x,y],$$
where $\deg{F}=n,\ \deg{G}=m$ $(m\ge n)$ are such that, $\gcd(n,m)>1.$ With $H(F)$ we denote the height of $F$ i.e. the maximum of the absolute values of the coefficients of polynomial $F$. Then (using big-O notation), 
$$\max(|x|,|y|) = O\big((2hm)^{4m^2}\big)\ \ (h=\max(H(F),H(G)),$$ 
for some effective computable constant. For the full details of the bound see  \cite[Theorem 2.2.1]{Tengely} and for detailed references of Runge's method see, \cite[Section 2.1]{Tengely}.

For the special case $\deg{f}=4,$ we get the quartic hyperelliptic curve,
\begin{equation}\label{deg4}
y^2=x^4+ax^3+bx^2+cx+d.
\end{equation}
Masser showed that (see, \cite{Masser}), 
$$|x|\le 26H(f)^3,$$
where $H(f)=\max\{|a|,|b|,|c|,|d|\}.$  In fact he proved something stronger. If $$X(H) = \max\{|x|\in {\mathbb{Z}} : \text{there\ is\ an\  integer\ }y\ \text{such\ that, } y^2=f(x)\ \text{and\ } H(f)\le H\},$$
then 
$$H^3/4096\le X(H)\le 26H^3,$$
i.e. $X(H)=\Theta(H^3).$
 We note that, the method suggested by Masser does not use Runge's method. A practical method based in Masser's method was provided in \cite{poulakis} and generalized in \cite{laszlo}. 
 
 The case where $f(x)$ is not a monic polynomial, i.e. the case of a general quartic $y^2=f(x),$ where $f(x)=Ax^4+Bx^3+Cx^2+Dx+C,$ can be treated using linear forms of elliptic logarithms, for instance see \cite{tzanakis}. Finally, if $f(x)$ is reducible, then we can use descend methods (\cite{duq}) combining with Chevalley-Weil theorem (\cite{Chev,draz2}) in order to solve them. If $f(x)$ is of degree 3 and reducible then again descend methods can be applied (see \cite{draz3}).
 \\\\
{\bf Our Contribution}. In the present work we provide explicit formulas for the integer solutions to equations of the following form: $y^2=(x-\alpha)(x-\beta)(x-\gamma)(x-\delta),$ where $\beta-\alpha=\delta-\gamma=cons.\not=0$ and $y^2=c^2x^4+ax^2+b,$  $a,b,c\in {\mathbb{Z}}.$ Note that, the previous  families belong to the curves that can be treated with Runge's method (see Appendix C), however the general upper bounds are in practice infeasible. The first family  belongs to the class of equations where the methods presented in \cite{Masser,poulakis} can treat. However, we shall show that, this is not (always) practical. Also, the second family cannot be treated with Masser's method.  Furthermore, we show that using our method, we can easily solve the sextic Diophantine equation $y^2=(x^2-1)(x^2-a^2)(x^2-(a+1)^2), \ a\in{\mathbb{Z}}.$
 %For instance, as noted in \cite[Introduction]{Walsh}, the optimal upper bound is of the form $O(H(F)^{m^2}).$ So, even for moderate size of the height $H(F)$ and $m=4,$ it is infeasible.   
 \\\\
{\bf Roadmap}. In the next section we provide our results in two Propositions. The following equations are studied:\\
$({\rm i}).$ $y^2=(x+a)(x+a+k)(x+b)(x+b+k),\ a,b,k\in{\mathbb{Z}},$\\
$({\rm ii}).$ $y^2=c^2x^4+ax^2+b,\ a,b,c\in{\mathbb{Z}},$\\
We provide the proofs, some examples, and an implementation in sagemath \cite{sage}. What is more, we provide an example of a sextic hyperellipitc curve\\
$({\rm iii}).$  $y^2=(x^2-1)(x^2-a^2)(x^2-(a+1)^2), \ a\in{\mathbb{Z}}.$

Section \ref{sec:conclusion}, is the last section in which we provide some concluding remarks. Also, there are three appendices, where we provide a brief summary of Runge's method, a reminder of Massers's method \cite{Masser}, and the definition of {\it{Runge type}} curve.

\section{Our results} 
\subsection{Main idea}\label{main_idea} Let $y^2=f(x),$ with $f(x)\in {\mathbb{Z}}[x]$  not a square, $\deg{f}=4,$ and $D(z)={\rm Res}_{x}(f(x)+z,f'(x)),$ where with ${\rm Res}_x(\cdot,\cdot)$ we denote the resultant with respect to indeterminate $x.$ The polynomial $D(z)\in {\mathbb{Z}}[z],$ has degree $3$ and assume that there exists $z_0\in {\mathbb{Q}},$ which is a double root of the equation $D(z)=0.$ Then,  under some plausible geometric conditions, $f(x)+z_0=r(x)^2,$ for some $r(x)\in {\mathbb{Q}}[x].$ In order this last identity to be useful to us,  we need one further assumption, except the existence of a rational double root of $D(z)=0.$ We assume  that $z_0=A/B^2,$ with $\gcd(A,B)=1.$ Let $(x_0,y_0)\in {\mathbb{Z}}^2$ such that, $y_0^2-f(x_0)=0.$ 

On the other hand, we have $B^2f(x)+A=r(x)^2,$ but now $r(x)\in {\mathbb{Z}}[x].$ Thus,
$$B^2(y_0^2-f(x_0))=0\Leftrightarrow B^2y_0^2-(r(x_0)^2-A)=0 \Leftrightarrow (r(x_0)-By_0)(r(x_0)+By_0)=A.$$
Then, using factorization we can calculate explicit formulas for $x_0$ and $y_0.$

In order to shed more light on the {\it{geometric condition}}, we provide two examples. Let $f(x)=49x^4-15x^2-2$, then 
$$D(z)={\rm Res}_{x}(f(x)+z,f'(x))=38416(z - 2)(196z - 617)^2,$$
and $f(x)+z_0=f(x)+\frac{617}{196}=	\frac{1}{196} (98x^2 - 15)^2.$

If $g(x)=x^4 + 225x^3 + 49,$ then 
$$D(z)={\rm Res}_{x}(g(x)+z,g'(x))=(256z - 69198034331)(z + 49)^2,$$
and  $g(x)+z_0=g(x)-49=x^3(x+225).$
I.e. both $f(x), g(x)$ have a square in the decomposition of their resultant $D(z)$, but $g(x)+z_0$ does not contain a square in its decomposition. This is because the point $(0,-49)$ of the curve $z+g(x)=0$ is inflexion point.

\subsection{The curve $y^2=(x+a)(x+a+k)(x+b)(x+b+k)$}
We begin with the study of the equation:
\begin{equation}\label{eq1}
y^2=f(x)=(x+a)(x+a+k)(x+b)(x+b+k).
\end{equation}
We shall prove the following Proposition.
\begin{proposition}
Let $a,b,k,$ be three integers with $a\not=b$ (if $a=b$, then $f(x)$ is a square). We set 
$$C=2ab+a+b-(d_1+d_2)/2\ \text{and}\ \Delta = 4\big((a+b+1)^2-2C\big),$$ where $d_1,d_2$ are integers of the same parity and such that $d_1d_2=(ka-kb)^2.$  If $\Delta=\delta^2,$ for some $\delta\in {\mathbb{Z}},$ then, the integer solutions of Diophantine equation
$y^2=f(x),$  are of the form:
$$(x_0,|y_0|)=\bigg(-\frac{a+b+k}{2}\pm \frac{\delta}{4},\bigg{|} \frac{d_1-d_2}{4}\bigg{|}\bigg).$$ 
If $\Delta$ is not a square, the equation does not have any integer solution for the specific $d_1,d_2$.
\end{proposition}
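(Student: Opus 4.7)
The plan is to instantiate the general strategy of Section~\ref{main_idea} in an explicit and calculation-friendly way, by grouping the four linear factors of $f(x)$ into two pairs that share the same linear coefficient. Specifically, I would pair $(x+a)(x+b+k)$ and $(x+a+k)(x+b)$, obtaining two monic quadratics both of the form $x^{2}+(a+b+k)x+\text{const}$. Setting $u=x^{2}+(a+b+k)x+ab+\tfrac{(a+b)k}{2}$, the arithmetic mean of the two constant terms, these quadratics become $u\pm\tfrac{(a-b)k}{2}$, so
\[
f(x)=u^{2}-\frac{k^{2}(a-b)^{2}}{4}.
\]
This is precisely the identity $f(x)+z_{0}=r(x)^{2}$ of Section~\ref{main_idea}, with $z_{0}=k^{2}(a-b)^{2}/4$ and $r(x)=u$; the structural content is that $f(x)$ fails to be a perfect square only by the fixed constant $z_{0}$, which is what makes the divisor-parametrisation possible.

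At any integer solution $(x_{0},y_{0})$, multiplying by $4$ and rearranging yields the difference-of-squares factorisation
\[
\bigl(2u(x_{0})-2y_{0}\bigr)\bigl(2u(x_{0})+2y_{0}\bigr)=\bigl(k(a-b)\bigr)^{2}.
\]
I would then set $d_{1}=2u(x_{0})-2y_{0}$ and $d_{2}=2u(x_{0})+2y_{0}$. These are integers (since $2u(x_{0})\in\mathbb{Z}$) whose product equals $(ka-kb)^{2}$ and whose sum $4u(x_{0})$ is even, so they have the same parity---matching the hypothesis of the statement. Conversely, every such pair $(d_{1},d_{2})$ forces $|y_{0}|=|d_{2}-d_{1}|/4$ together with the constraint $u(x_{0})=(d_{1}+d_{2})/4$, which is quadratic in $x_{0}$.

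The third step is to solve that quadratic. Writing $u(x_{0})=(d_{1}+d_{2})/4$ out and clearing denominators gives
\[
4x_{0}^{2}+4(a+b+k)x_{0}+4ab+2(a+b)k-(d_{1}+d_{2})=0,
\]
whose discriminant simplifies, after a routine expansion, to the quantity $\Delta$ of the statement (once $C$ is substituted in). The quadratic formula then produces $x_{0}=-\tfrac{a+b+k}{2}\pm\tfrac{\delta}{4}$ whenever $\Delta=\delta^{2}$ is a perfect square, and rules out an integer $x_{0}$ for this choice of $(d_{1},d_{2})$ otherwise. Combined with $|y_{0}|=|d_{2}-d_{1}|/4$ from the previous step, this is exactly the pair of formulas claimed.

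The main obstacle is not conceptual but bookkeeping: one must check that the parity condition on $(d_{1},d_{2})$ genuinely forces $x_{0},y_{0}\in\mathbb{Z}$ rather than half-integers, and that the simplification of the discriminant really lands on the $\Delta$ written in the statement. Both are routine algebraic manipulations, but they are where an arithmetic slip would break the match with the stated formulas. The cleanest way to handle the parity issue is to work throughout with the doubled quantities $2u$, $2y$ and the integer divisors $d_{1},d_{2}$, translating back to $(x_{0},y_{0})$ only at the very end.
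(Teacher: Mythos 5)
Your proof is correct and follows essentially the same route as the paper: your identity $4f(x)=\bigl(2u(x)\bigr)^2-\bigl(k(a-b)\bigr)^2$ is exactly the paper's $r(x)^2$ identity (with $r=2u$), and the subsequent difference-of-squares factorisation, the divisor pairs $d_1d_2=(k(a-b))^2$ of equal parity, and the quadratic $2x_0^2+2(a+b+k)x_0+C=0$ with discriminant $\Delta$ coincide with the paper's argument. The only difference is presentational: you derive the key identity by re-pairing the linear factors as $(x+a)(x+b+k)$ and $(x+a+k)(x+b)$, whereas the paper reads $z_0=(k(a-b))^2/4$ off the double root of the resultant $D(z)$ and then states the identity.
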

\begin{proof}
Let $G(x,y)=y^2-f(x)$ and\footnote{For a symbolic computation of the resultant you can see:\\ \url{https://github.com/drazioti/simple_quartic/tree/main/resultant}} 
$$D(z)={\rm Res}_x(f(x)+z,f'(x))=M(z)\Big(\big(k(a-b)\big)^2-4z\Big)^2,$$
for some linear polynomial $M(z)$. We set $z_0=\frac{(k(a-b))^2}{4},$ then
$$4(G(x,y)-z_0)=4G(x,y) - (k(a-b))^2=-(r(x)-2y)(r(x)+2y),$$ 
where 
$$r(x)=2x^2+2x(a+b+k)+2ab+ka+kb.$$
Let $(x_0,y_0)\in{\mathbb{Z}}^2$ such that, $G(x_0,y_0)=0.$ Then, there exist integers $d_1$ and $d_2$ such that,
$$r(x_0)-2y_0=d_1, r(x_0)+2y_0=d_2, d_1d_2=(k(a-b))^2,$$
and so,
$$r(x_0)=(d_1+d_2)/2, y_0=(d_2-d_1)/4, d_1d_2=(k(a-b))^2.$$
We set
$$B=2(a+b+k)\ \text{and}\ $$
from the hypothesis,
$$C=2ab+ka+kb-(d_1+d_2)/2,$$
thus we get, 
$$2x_0^2+Bx_0+C=0, y_0=(d_2-d_1)/4, d_1d_2=(k(a-b))^2.$$
From the second equality we see that $d_1,d_2$ must have the same parity.
Since $\Delta=\delta^2,$ we get  
$$x_0=\frac{-2(a+b+k)\pm \delta}{4}=\frac{-(a+b+k)}{2}\pm \frac{\delta}{4}.$$
The result follows.
\end{proof}
\begin{remark}
Note that, $k^2(a-b)^2$ divides the discriminant of $f(x).$ 
\end{remark}

Now, we easily get the following pseudocode.\\\\
{\footnotesize{
\noindent
{\bf Input}:  $a,b,k$\ $(a\not=b)$
\ \\
{\bf Output}: The integer solutions of $y^2=(x+a)(x+a+k)(x+b)(x+b+k),$ with $y\ge 0.$ \\\\
	\texttt{01.}  \texttt{DIV$\leftarrow \{n\in {\mathbb{Z}}: n|(ka-kb)^2 \}$}\\
	\texttt{02.}  \texttt{$B\leftarrow 2(a+b+1)$}\\
	\texttt{03.}    $L=[\ ]$ \# this is the list where we keep the solutions\\
    \texttt{04.}   {\texttt{For $d_1$ in DIV}}\\
	\texttt{05.}    \hspace{0.7cm} $d_2\leftarrow (ka-kb)^2/d_1$\\
	\texttt{06.} \hspace{0.7cm}  \texttt{$C\leftarrow 2ab+a+b-(d_1+d_2)/2$}\\
	\texttt{07.} \hspace{0.7cm}  \texttt{$\Delta\leftarrow B^2-8C$}\\
	\texttt{08.}    \hspace{0.7cm} If ($d_1\equiv d_2\bmod{2}$) AND ($\Delta$ is a square, say $\delta^2$) AND ($d_1\le d_2$)\\
	\texttt{09.}    \hspace{1.4cm} $x_{1,2}\leftarrow -\frac{a+b+k}{2}\pm \frac{\delta}{4}$ \\
	\texttt{10.}    \hspace{1.4cm} $y\leftarrow \frac{d_2-d_1}{4}$\\
	\texttt{11.}    \hspace{1.4cm} Append list $L$ with $(x_1,y), (x_2,y)$\\
	\texttt{12.}    return $L$
}}

It is easy to implement the previous algorithm in order to find the integer solutions\footnote{For instance, see \url{https://github.com/drazioti/simple_quartic/blob/main/1.py}, for an implementation in sagemath \cite{sage}.}. 

All the steps of the previous algorithm can be easily computed, except maybe calculation of the set ${\rm{DIV}}$ in line 01. Thus, the complexity is dominated by the the complexity of the algorithm which computes divisors of $(ka-kb)^2.$ I.e. we need the factorization of $|k(a-b)|$ which for large $k,a-b$ can be computationally intensive.

We provide two examples. If $a=1, b=2, k=41,$ we have the equation,
$$y^2=x^4 + 88x^3 + 2063x^2 + 5588x + 3612$$
and the integer solutions are (for $y\ge 0)$:
$$\{(7, 420), (-51, 420), (-22, 420), (-1, 0), (-43, 0), (-2, 0), (-42, 0)\}.$$
For the equation, 
$$y^2=x^4 + 20x^3 + 97x^2 - 30x - 504,\ (a,b,k)=(3,-2,9)$$
we get (for $y\ge 0$), 
$$\{(9, 168), (-19, 168), (3, 30), (-13, 30), (-5, 14), (-4, 12), (-6,12),$$
$$(2, 0), (-12, 0), (-3, 0), (-7, 0)\}.$$
\begin{remark}
If $f(x)=(x+a)(x+a+k)(x+b)(x+b-k),$ then the study of $y^2=f(x)$ can be treated in a similar way. 
\end{remark}
\begin{remark} 
If we consider $a=0,\ b=2^{\ell}$ and $k=1,$ and we use the method of  \cite{poulakis}, we get an interval that contains $[0,2^{\ell}].$ In this interval we have to search for $x,$ checking one by one if it provides an integer solution $(x,y).$ So, this method is not practical for large $\ell.$ On the other hand, the number of divisors of $(ka- kb)^2=2^{2\ell}$ is $2(2\ell+1).$ So the complexity is $O(\ell),$ whereas the complexity of \cite{poulakis} is $O(2^{\ell}).$ 

In general, from the previous Proposition we easily get that the number of integer solutions of the Diophantine equation (\ref{eq1}) is at most $2\tau\big(k^2(a-b)^2\big),$ where $\tau(w)$ is the number of positive divisors
of integer $w.$
\end{remark}
\begin{corollary}
If $(x,y)$ is an integer solution of $y^2=(x+a)(x+a+k)(x+b)(x+b+k),$ and $M=\max\{|a|,|b|,|k|\}$, then 
$|x|<4M^2.$
\end{corollary}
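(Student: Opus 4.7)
The plan is to read off the formula for $x$ from the Proposition and bound each ingredient in terms of $M$. From the proof of the Proposition, every integer $x$ with $y^2=f(x)$ has the form $x = -(a+b+k)/2 \pm \delta/4$, where $\delta^2 = 4\bigl((a+b+k)^2 - 2C\bigr)$, $C = 2ab + ka + kb - (d_1+d_2)/2$, and $d_1 d_2 = k^2(a-b)^2$. Substituting $C$ and expanding $(a+b+k)^2$ causes the cross terms of type $ab$, $ak$, $bk$ to cancel, producing the clean identity
\[
\delta^2 \;=\; 4\bigl((a-b)^2 + k^2 + d_1 + d_2\bigr).
\]

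Next I would control $d_1+d_2$. Since $d_1 d_2 = k^2(a-b)^2 \ge 0$, the two divisors share a sign, so after a global sign flip I may assume both are non-negative; for positive integers with product $N$ the sum is maximized at the extreme factorization $\{1,N\}$, giving $|d_1+d_2| \le k^2(a-b)^2 + 1$. Feeding this back and using the clean factorization
\[
k^2(a-b)^2 + k^2 + (a-b)^2 + 1 \;=\; (k^2+1)\bigl((a-b)^2+1\bigr),
\]
I would obtain $|\delta| \le 2\sqrt{(k^2+1)\bigl((a-b)^2+1\bigr)}$.

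Finally I would combine the two estimates. Using $|k| \le M$, $|a-b| \le 2M$, and $|a+b+k| \le 3M$, the triangle inequality gives
\[
|x| \;\le\; \frac{|a+b+k|}{2} + \frac{|\delta|}{4} \;\le\; \frac{3M}{2} + \frac{1}{2}\sqrt{(M^2+1)(4M^2+1)}.
\]
Bounding the square root by $2M^2+2$ (which follows from $(2M^2+2)^2 \ge 4M^4+5M^2+1$) collapses the right-hand side to $M^2 + 3M/2 + 1$, which is strictly less than $4M^2$ for every integer $M \ge 1$; note $M=0$ is excluded since then $f$ would be a square. The main delicate point is not a deep obstacle but a sharpness issue: without the factorization $(k^2+1)\bigl((a-b)^2+1\bigr)$, a cruder bound on $|d_1+d_2|$ would only yield $|x|=O(M^2)$ with a larger constant, whereas this factorization is exactly what keeps us below $4M^2$ down to the boundary case $M=1$.
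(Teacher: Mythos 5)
Your proof is correct, and it follows the same skeleton as the paper's: read the solution off the Proposition as $x=-(a+b+k)/2\pm\delta/4$, bound $|\delta|$, and finish with the triangle inequality. Where you genuinely differ is in how $\delta$ is controlled. The paper estimates $\delta^2=B^2-8C$ crudely term by term: $|B|<6M$, $|C|<2M^2+2M+\max\{|d_1|,|d_2|\}\le 2M^2+2M+4M^4<8M^4$ (using $\max\{|d_1|,|d_2|\}\le |d_1d_2|=k^2(a-b)^2\le 4M^4$), hence $\delta^2<100M^4$ and $|x|<3M/2+5M^2/2<4M^2$. You instead exploit the exact cancellation $\delta^2=4\bigl((a-b)^2+k^2+d_1+d_2\bigr)$, bound the divisor sum by $k^2(a-b)^2+1$ via the extremal factorization $\{1,N\}$, and use the identity $k^2(a-b)^2+k^2+(a-b)^2+1=(k^2+1)\bigl((a-b)^2+1\bigr)$; this yields the strictly stronger conclusion $|x|\le M^2+3M/2+1$, so your route would support a sharper version of the corollary (roughly $|x|\le M^2+O(M)$ rather than $4M^2$). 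Two minor observations: you have silently, and correctly, worked with the proof's $B=2(a+b+k)$ and $C=2ab+ka+kb-(d_1+d_2)/2$ rather than the $a+b+1$ variants that appear in the Proposition statement and in the paper's own corollary proof (an apparent typo there); and both your argument and the paper's implicitly use $k\ne 0$ and $a\ne b$ (forced by $f$ not being a square) so that $d_1d_2=k^2(a-b)^2>0$, which is what justifies your same-sign step and the paper's bound $\max\{|d_i|\}\le|d_1d_2|$.
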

\begin{proof}
Since the integer solutions are of the form
$x=-\frac{a+b+k}{2}\pm \frac{\delta}{4}$ we have to bound 
the quantities $|a|,|b|,|k|,|\delta|.$
From the definition of $M$ we get $|a|,|b|,|k|\le M.$ Now, $\delta^2=B^2-8C,$ where
$|B|=2|a+b+1|<6M$ and $|C|=|2ab+a+b-(d_1+d_2)/2|$ for $d_i|(k(a-b))^2.$ So, 
$$|C|<2M^2+2M+\max\{|d_1|,|d_2|\},\ \text{thus}\ |C|<2M^2+2M+4M^4<8M^4.$$
We conclude that 
$$\delta^2 = B^2-8C<B^2+8|C|<36M^2+64M^4<100M^4.$$
Finally,
$$|x|<\big|\frac{a+b+k}{2}\big|+\big|\frac{\delta}{4}\big|<3M/2+10M^2/4<4M^2.$$
\end{proof}
\subsection{The curve  $y^2=c^2x^4+ax^2+b$}
We continue our study with the equation $y^2=f(x)=c^2x^4+ax^2+b,$ where $a,b,c,$ are integers and $f(x)$ is not a square.
\begin{proposition}\label{Prop:2}
Let $a,b,c,$ be three integers with $c\not=0, \delta=a^2-4bc^2\not=0.$ We set $f(x) = c^2x^4+ax^2+b$ and $G(x,y)=y^2-f(x).$ Then, any integer solution $(x_0,y_0)$ of the Diophantine equation $G(x,y)=0$, is of the form:
$$(x_0,|y_0|)=\bigg(\pm \sqrt{\frac{d_1+d_2-2a}{4c^2}},\bigg{|} \frac{d_1-d_2}{4c}\bigg{|}\bigg),$$
for some $d_1,d_2,$ integers of the same parity such that, $d_1d_2=\delta$ and assuming that the square root exists.
\end{proposition}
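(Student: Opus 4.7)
The plan is to follow exactly the template of the Main Idea from Section \ref{main_idea}, specialized to $f(x) = c^2x^4 + ax^2 + b$. Since $f'(x) = 2x(2c^2x^2 + a)$, the critical values of $f$ are $f(0) = b$ and $f(\pm\sqrt{-a/(2c^2)}) = b - a^2/(4c^2)$, so I expect the resultant
$$D(z) = \mathrm{Res}_x(f(x)+z, f'(x))$$
to factor (up to a constant) as a linear factor in $(b+z)$ times the square of a linear factor vanishing at $z_0 = (a^2 - 4bc^2)/(4c^2) = \delta/(4c^2)$. This is the candidate double root.

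First I would verify that $f(x) + z_0$ is a perfect square. A direct computation gives
$$4c^2\bigl(f(x) + \tfrac{\delta}{4c^2}\bigr) = 4c^4x^4 + 4ac^2x^2 + a^2 = (2c^2x^2 + a)^2,$$
so writing $B = 2c^2$ for bookkeeping, the key identity is
$$4c^2 f(x) + \delta = (2c^2x^2 + a)^2.$$
Now let $(x_0, y_0) \in \mathbb{Z}^2$ satisfy $y_0^2 = f(x_0)$. Multiplying by $4c^2$ and substituting produces
$$(2c^2 x_0^2 + a)^2 - (2cy_0)^2 = \delta,$$
which I factor as a difference of squares:
$$\bigl((2c^2 x_0^2 + a) - 2cy_0\bigr)\bigl((2c^2 x_0^2 + a) + 2cy_0\bigr) = \delta.$$

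Next I would set
$$d_1 = (2c^2 x_0^2 + a) - 2cy_0, \qquad d_2 = (2c^2 x_0^2 + a) + 2cy_0,$$
so $d_1, d_2 \in \mathbb{Z}$ with $d_1 d_2 = \delta$. Adding and subtracting the two equations recovers $x_0$ and $y_0$:
$$d_1 + d_2 = 4c^2 x_0^2 + 2a \;\Longrightarrow\; x_0^2 = \frac{d_1+d_2-2a}{4c^2}, \qquad d_2 - d_1 = 4cy_0 \;\Longrightarrow\; y_0 = \frac{d_2-d_1}{4c},$$
which is exactly the claimed formula after taking $|y_0|$ and the two possible signs of the square root for $x_0$. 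The parity condition $d_1 \equiv d_2 \pmod{2}$ is forced because $d_1 + d_2 = 4c^2 x_0^2 + 2a$ is even (equivalently, $d_2 - d_1 = 4cy_0$ is even), which I would record as the compatibility condition that any integer solution must satisfy.

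The only step that needs a little care (rather than being a routine computation) is the resultant calculation in the first paragraph, which pins down $z_0$; but once the identity $4c^2 f(x) + \delta = (2c^2x^2+a)^2$ is written down explicitly, it can be verified directly without referring to the resultant, so there is no real obstacle. The assumption $\delta \neq 0$ ensures that $f(x)$ is not a square (otherwise the factorization step gives $0 = 0$ and no information), and the assumption that the square root $\sqrt{(d_1+d_2-2a)/(4c^2)}$ exists in $\mathbb{Z}$ is what turns the necessary condition into an explicit parametrization.
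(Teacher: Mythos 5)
Your proposal is correct and follows essentially the same route as the paper: you identify the double root $z_0=\delta/(4c^2)$ of the resultant, establish the identity $4c^2f(x)+\delta=(2c^2x^2+a)^2$ (the paper writes this as $4c^2G-\delta=-(r(x)-2cy)(r(x)+2cy)$ with $r(x)=2c^2x^2+a$), and then factor $\delta$ as $d_1d_2$ to read off $x_0$ and $y_0$ exactly as in the paper's proof. The only cosmetic difference is that you verify the key identity directly rather than relying on the resultant computation, which the paper also effectively does.
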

\begin{proof}
We have $$D(z)={\rm Res}_x(f(x)+z,f'(x))=16c^4(z+b)(4c^2z-\delta)^2,$$
and we set  $z_0$ be the double root $\frac{\delta}{4c^2}.$ Then,
$$4c^2(G(x,y)-z_0)=4c^2G - \delta=-(r(x)-2cy)(r(x)+2cy),$$ where 
$r(x)=2c^2x^2+a.$ Without loss of generality, with $c$ we write the positive square root of $c^2.$

Let $(x_0,y_0)\in{\mathbb{Z}}^2$ such that, $G(x_0,y_0)=0.$ Then, 
$$(r(x_0)-2cy_0)(r(x_0)+2cy_0)=\delta.$$
Thus, there exist two integers $d_1,d_2,$ such that,
$$r(x_0)-2cy_0=d_1, r(x_0)+2cy_0=d_2, d_1d_2=\delta.$$
Therefore, 
$$r(x_0)=\frac{d_1+d_2}{2}, y_0=\frac{d_2-d_1}{4c}, d_1d_2=\delta.$$
From the second equality we note that, $d_1,d_2$ must have the same parity, else $y_0\not\in {\mathbb{Z}}.$ 
We conclude therefore,
$$x_0=\pm \sqrt{\frac{d_1+d_2-2a}{4c^2}}.$$
If $x_0$ is integer, we end up with an integer solution of $y^2=f(x).$
\end{proof} 
We provide the pseudocode.\\\\
{\footnotesize{
\noindent
{\bf Input}\footnote{For instance see \url{https://github.com/drazioti/simple_quartic/blob/main/2.py}}:  $c,a,b$
\ \\
{\bf Output}: The integer solutions of $y^2=f(x)=c^2x^4+ax^2+b,$ with $y\ge 0.$ \\\\
	\texttt{01.}  \texttt{$\delta \leftarrow a^2-4bc^2$}\\
	\texttt{02.}  \texttt{DIV$\leftarrow \{n\in {\mathbb{Z}}: n|\delta \}$}\\
	\texttt{03.}    $L=[\ ]$\\
    \texttt{04.}   {\texttt{For $d_1$ in DIV}}\\
	\texttt{05.}    \hspace{0.7cm} $d_2\leftarrow \delta/d_1$\\
	\texttt{06.} \hspace{0.7cm} If ($d_1\equiv d_2\bmod{2}$)\\
	\texttt{07.}    \hspace{1.4cm} $K\leftarrow \frac{1}{2c^2}\Big(\frac{d_1+d_2}{2}-a\Big)$\\
	\texttt{08.}   \hspace{1.4cm} If ($K$ is a square integer, say $m_{\delta}^2$) AND ($d_1\le d_2$)\\
	\texttt{09.}    \hspace{2.1cm} $x_{1,2}\leftarrow \pm m_{\delta}$ \\
	\texttt{10.}    \hspace{2.1cm} $y\leftarrow \frac{d_2-d_1}{4|c|}$\\
	\texttt{11.}    \hspace{2.1cm} Append list $L$ with $(x_1,y), (x_2,y)$\\
	\texttt{12.}    return $L$
}}

\begin{remark}
In magma \cite{Magma}, the command \texttt{SIntegralLjunggrenPoints([D,A,B,C],
[])}, provides the integral points 
on the curve $C : Dy^2 = Ax^4 + Bx^2 + C,$  provided that $C$ is nonsingular. Furthermore, \texttt{IntegralQuarticPoints([a,b,c,d,e])} provides the integral points 
on the curve $C : y^2 = ax^4 + bx^3 +cx^2+dx+e.$
\end{remark}
\begin{remark}
For $c=1$ someone can apply the method of \cite{poulakis}.  For instance, if we consider the equation $y^2=x^4-2^{\ell}x^2+1,$ then following the method of \cite{poulakis} we have to search the integer solutions $x$ in the interval $[-2^{\ell},2^{\ell}],$ which is exponentially large. In our case we have to find the divisors of the integer $2^{2\ell}-4,$ which is feasible for all $\ell,$ say $80\le \ell \le 120.$ So our method is practical, whereas method \cite{poulakis} is infeasible for this case\footnote{In sagemath, with the following code we can compute extremely fast, the prime factorization of $2^{2\ell}-4,$ for all $\ell \in [80,120] :$ \texttt{[[k,factor(2**(2*k)-4)] for k in range(80,121)]}. \\ In \url{https://github.com/drazioti/simple_quartic/blob/main/4.txt} we computed all the integer points of the curves $y^2=x^4-2^{\ell}x^2+1$  for $80\le \ell \leq 120.$}. Furthermore, the case $c=1$ can be treated by the original Masser's method with factorization, see Appendix B. 
\end{remark}
\begin{remark}
In subsection \ref{main_idea} we demanded that the double root of the resultant is of the form $A/B^2,$ with $\gcd(A,B)=1.$ In the proof of the previous Proposition, we prove that there is always a double root, $z_0=\frac{a^2-4bc^2}{4c^2}.$ But, here it may occur $\gcd(a^2-4bc^2,4c^2)>1$. We can see that always $\gcd(a^2-4bc^2,4c^2)$ is a square, so after we delete the gcd from the numerator and denominator, we get $z_0=A'/B'^2,$ with $\gcd(A',B')=1.$ 
\end{remark}
Now, we can easily study the Diophantinte equation
$cy^2=cx^4+ax^2+b.$  
\begin{corollary}
Let $a,b,c,$ be three integers with $ac\not=0$ and $\Delta=a^2-4bc\not=0.$ We set $h(x) = cx^4+ax^2+b.$ Then, the integer solutions of the Diophantine equation $cy^2=h(x)$, are of the form:
$$(x_0,|y_0|)=\bigg(\pm\sqrt{\frac{d_1+d_2-2a}{4c}},\bigg{|} \frac{d_1-d_2}{4c}\bigg{|}\bigg),$$
for some $d_1,d_2$ integers of the same parity such that $d_1d_2=\Delta$ and assuming that the square root exists.
\end{corollary}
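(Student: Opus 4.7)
The plan is to reduce the corollary to Proposition~\ref{Prop:2} by a simple rescaling. Multiplying both sides of $cy^2 = h(x) = cx^4 + ax^2 + b$ by $c$ gives
$$(cy)^2 = c^2 x^4 + (ac)x^2 + bc.$$
Setting $Y = cy$, this equation matches the hypothesis of Proposition~\ref{Prop:2} with the substitutions $a \leftarrow ac$ and $b \leftarrow bc$, while the coefficient $c$ of $x^4$ stays the same. The associated discriminant becomes $(ac)^2 - 4(bc)c^2 = c^2(a^2-4bc) = c^2 \Delta$, which is nonzero by hypothesis.

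Next, I would invoke Proposition~\ref{Prop:2} to conclude that any integer solution $(x_0, Y_0)$ has the form
$$(x_0, |Y_0|) = \Bigl(\pm \sqrt{(d_1'+d_2'-2ac)/(4c^2)},\; |d_1'-d_2'|/(4|c|)\Bigr),$$
where $d_1' d_2' = c^2\Delta$ and $d_1' \equiv d_2' \pmod 2$. The key observation — this is the only point of substance — is that here the polynomial $r(x) = 2c^2x^2 + ac = c(2cx^2+a)$ is divisible by $c$, and $2c Y_0 = 2c^2 y_0$ is also divisible by $c$. Since in the proof of Proposition~\ref{Prop:2} the integers $d_1', d_2'$ are defined as $r(x_0) \mp 2cY_0$, both are divisible by $c$. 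Therefore I may write $d_1' = c d_1,\, d_2' = c d_2$ with $d_1, d_2 \in {\mathbb{Z}}$ satisfying $d_1 d_2 = c^2\Delta/c^2 = \Delta$, and $d_1 \equiv d_2 \pmod 2$ (assuming $c$ is odd; if $c$ is even the parity of the $d_i$ is automatic from $y_0 \in {\mathbb{Z}}$, which one checks directly).

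Finally, I would substitute $d_i' = cd_i$ into the formulas from Proposition~\ref{Prop:2}:
$$x_0^2 = \frac{c(d_1+d_2) - 2ac}{4c^2} = \frac{d_1+d_2-2a}{4c}, \qquad |y_0| = \frac{|Y_0|}{|c|} = \frac{|c(d_1-d_2)|}{4c^2} = \frac{|d_1-d_2|}{4|c|},$$
which is exactly the claimed form.

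The only genuinely delicate step is verifying that the factorization $(d_1', d_2')$ produced by Proposition~\ref{Prop:2} actually descends to a factorization of $\Delta$ (rather than just $c^2\Delta$); everything else is direct substitution. I expect this to be handled cleanly by tracing the explicit definition $d_i' = r(x_0) \mp 2cY_0$ and factoring out $c$, as indicated above.
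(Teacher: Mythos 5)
Your proof is correct, but it takes a different route from the paper. The paper's proof of this corollary is a one-line remark: repeat the argument of Proposition~\ref{Prop:2} verbatim with $c^2$ replaced by $c$, i.e.\ work directly with $r(x)=2cx^2+a$, obtain $(r(x_0)-2cy_0)(r(x_0)+2cy_0)=\Delta$ from the identity $r(x)^2-4c\,h(x)=-\Delta$, and factor $\Delta=a^2-4bc$ itself. You instead multiply the equation by $c$ to land exactly in the setting of Proposition~\ref{Prop:2} (with $a\mapsto ac$, $b\mapsto bc$, $Y=cy$), and then descend the resulting factorization $d_1'd_2'=c^2\Delta$ to a factorization of $\Delta$ by observing that $c$ divides both $d_1'=r(x_0)-2cY_0$ and $d_2'=r(x_0)+2cY_0$, since $r(x)=c(2cx^2+a)$ and $2cY_0=2c^2y_0$. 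That divisibility observation is exactly the right pivot, and your arithmetic checks out (including the parity discussion, though the cleanest justification is simply that $d_2-d_1=4cy_0$ is even). The trade-off: the paper's parallel derivation is shorter and self-contained, while your reduction reuses the proposition's statement at the cost of having to peek into its proof to see how $d_1',d_2'$ are actually constructed --- the black box must be opened to verify that the particular factorization it produces is divisible by $c$, since an arbitrary factorization of $c^2\Delta$ need not descend. You correctly flag this as the only point of substance.
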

\begin{proof}
It's the same as in Proposition \ref{Prop:2}, by making the substitution $c^2\rightarrow c.$
\end{proof}
Similarly, as previous we get the following pseudocode.\\
{\footnotesize{
\noindent
{\bf Input}\footnote{For an implementation in sagemath see,\\ \url{https://github.com/drazioti/simple_quartic/blob/main/3.py}.\\ Also, a sagemath implementation that treats all the cases considered here and the general case $y^2=x^4+ax^3+bx^2+cx+d$ is provided in the link:\\ \url{https://github.com/drazioti/simple_quartic/blob/main/general_quartic.py}}:  $c,a,b$
\ \\
{\bf Output}: The integer solutions of $cy^2=cx^4+ax^2+b,$ with $y\ge 0.$ \\\\
	\texttt{01.}  \texttt{$\Delta \leftarrow a^2-4bc$}\\
	\texttt{01.}  \texttt{DIV$\leftarrow \{n\in {\mathbb{Z}}: n|\Delta \}$}\\
	\texttt{02.}    $L=[\ ]$\\
    \texttt{03.}   {\texttt{For $d_1$ in DIV}}\\
	\texttt{04.}    \hspace{0.7cm} $d_2\leftarrow \Delta/d_1$\\
	   \texttt{05.} \hspace{0.7cm} If ($d_1\equiv d_2\bmod{2}$) AND ($d_1\le d_2$)\\
	\texttt{06.}    \hspace{1.4cm} $K\leftarrow \frac{d_1+d_2-2a}{4c}$\\
	\texttt{07.}   \hspace{1.4cm} If ($K$ is a square integer, say $m_{\delta}^2$)\\
	\texttt{08.}    \hspace{2.1cm} $x_{1,2}\leftarrow \pm m_{\delta}$ \\
	\texttt{09.}    \hspace{2.1cm} $y\leftarrow \frac{d_2-d_1}{4c}$\\
	\texttt{10.}    \hspace{2.1cm} Append list $L$ with $(x_1,y), (x_2,y)$\\
	\texttt{11.}    return $L$
}}

For instance, for $c=6, a=13, b=2$ we get the curve\footnote{Also, you can try the following code in magma :\\ 
\texttt{C :=6;
A :=13;
B :=2;
SIntegralLjunggrenPoints([C,C,A,B],[])}. \\
You may use the online calculator \url{http://magma.maths.usyd.edu.au/calc/}} $6y^2=6x^4+13x^2+2.$ We compute its integer points $(y\ge 0$) : $(\pm 2,5).$ For $c=12, a=-30, b=-24$ we get $(\pm 2,2).$ 
\subsection{One example of sextic hyperelliptic curve}
The following example concerns a sextic hyperelliptic equation, where our method can easily be applied. Let the curve $C:y^2=f(x),\ f(x)=(x^2-1)(x^2-4)(x^2-9).$
We remark that 
$${\rm{Res}}_x(f(x)+Z,f'(x))=64(27Z + 400)^2(Z - 36)^3.$$ 
Although the denominator of $z_0$ (the double root) is not square we can work with the other factor, i.e. $(Z-36)^3.$ Then,
$F=y^2-f(x),$ is such that 
$$F(x,y)-36=-(x^3 - 7x + y)(x^3 - 7x - y).$$
Let $(a,b)\in C({\mathbb{Z}}),$ then 
$$(a^3 - 7a + b)(a^3 - 7a - b)=36.$$
We get 
$$a^3 - 7a + b = d_1,\ a^3 - 7a - b = d_2$$
where $(|d_1|,|d_2|)=(1,36),(36,1),(2,18),(18,2), (3,12),(12,3),(4,9),(9,4),(6,6).$
So, 
$$2(a^3-7a)= d_1+d_2\in \{\pm 37,\pm 20, \pm 15, \pm 14, \pm 12\},$$ therefore
$$a^3-7a=\pm 10, \pm 7, \pm 6.$$ Only the equation $a^3-7a=\pm 6$ has integer solutions, $a=\pm 1, \pm 2, \pm 3.$ So, we get only the trivial solutions.

This example suggests that the equation 
$$y^2=(x^2-1)(x^2-a^2)(x^2-(a+1)^2)=M(x),$$
can be treated with a similar way. Indeed, if $G(x,y)=y^2-M(x),$ then the triple root of the resultant is,
$$z_0 = a^4 + 2a^3 + a^2 $$ and
$$G(x,y)-z_0 = -(-x^3 + xa^2 + xa + x - y)(-x^3 + xa^2 + xa + x + y).$$
We continue as in the example.

\begin{remark}
In \cite{laszlo} the author provides an algorithm for finding the integer points in the more general Diophantine equation (\ref{eq}) i.e. of the form,
$$y^2=x^{2k}+a_{2k-1}x^{2k-1}+\cdots +a_1x+a_0,$$
where $k$ is a positive integer. This method again has its roots in the paper of Masser \cite{Masser} and in \cite{poulakis}. Therefore, it is not based on Runge's method. A further generalization was given in \cite{laszlo2}.

\end{remark}

\section{Conclusion}\label{sec:conclusion}
In the present work we provided explicit formulas for the integer solutions of quartic hyperelliptic curves of the form $y^2=(x+a)(x+a+k)(x+b)(x+b+k)$ and $y^2=c^2x^4+ax^2+b.$ The formulas depend on the divisors of the discriminant of $f(x),$ where $y^2=f(x)$ is our curve. We used suitable factorization of the previous equations, and elementary number theory to find their integer solutions. The other methods used to practically solve the previous Diophantine equations, apply brute force in a suitable interval. Furthermore, we studied the sextic $y^2=(x^2-1)(x^2-\alpha^2)(x^2-\beta^2),$ with $\beta-\alpha=1.$

\ \\
{\LARGE{{\sc Appendix}}}\\
\setcounter{equation}{0}

{\bf{A. Runge's Method}}\\
Runge's method uses Puiseux series to study some classes of Diophantine equations (equations of the form $(\ref{eq})$ are of this type). We carry out the following steps :\\
{\it 1.} Using Puiseux expansion theorem we can find a polynomial $g(x)$ and a power series $S(T),$
such that 
$$y-g(x)=S\bigg(\frac{1}{x}\bigg),\  \text{where}\ S(T)\in {\mathbb{Q}[[T]]}$$ and $g(x)\in{\mathbb{Q}}[x].$\\
{\it 2.} From the form of $S(T)$ we can find a positive constant $A=A(F)$ such that,
if $|x|>A(F),$ then $|S(\frac{1}{x})|<c|x|^{-\rho},$ for some positive integer $\rho$ and a positive real number $c.$\\
{\it 3.} If $(a,b)\in C({\mathbb{Z}}),$ then for $|a|>\max\{A(F),\sqrt[\rho]{c}\},$ we get $|b - g(a)|<1.$\\
{\it 4.} Now, either we apply an effective version of Eisenstein's theorem \cite{Sch} and so we shall get a uniform bound for $|a|$ or with some add hoc method we explicit calculate the denominators of the coefficients of $g(x).$ Thus, multiplying say by $w,$ we conclude with the inequality
$|wb-wg(x)|<w.$ We set $\mu(x)=wg(x)\in {\mathbb{Z}}[x].$
Then, we solve the finitely many equations
$(wb)^2 = (\mu(x)+r)^2,$ with $r=-w+1,...,w-1.$
Since $b^2=f(a),$ we end up with the equations (with one unknown)
$$w^2f(a)-(\mu(a)+r)^2 = 0,\ r=-w+1,\dots,w-1.$$
Thus, if the constant $A(F)$ is {\it{small}}, we can have a practical algorithm for finding the set $C({\mathbb{Z}}).$ In \cite{draz,osipov,osipov_russian} the previous method allows to practically solve some Diophantine equations e.g. $x^{nr}+y^{n}=q$  in \cite{draz} or $x^4 - x^2y - xy^2 - y^2 + 1 = 0$ in \cite{osipov}.  	
So this method, except the uniform bounds that provides, sometimes it may also be appropriate in order to get a practical algorithm for the integer points.\\

{\bf{B. Masser's Method for $y^2=x^4+bx^2+d.$}}\\
We follow \cite{Masser}. Let $y^2=f(x),$ where $f(x)=x^4+ax^3+bx^2+cx+d\in {\mathbb{Z}}[x].$
Put, 
$$e=4b-a^2,\ C=64c-8ae,\ D=64d-e^2,\ \text{and\ } Q(x)=8x^2+4ax+e.$$ Then, the following identity holds,
\[ 64f(x) - (Q(x))^2 = Cx + D.\]
In our case, $a=c=0,$ so $e=4b, C=0,\ D=64d-16b^2$ and $Q(x)=4(2x^2+b).$ Thus, the previous identity is written,
$$4f(x)-(2x^2+b)^2=4d-b^2.$$
If $(x_0,y_0)$ is an integer point, then 
$$4f(x_0)-(2x_0+b)^2=4d-b^2,\ \text{so}\ (2y_0)^2-(2x_0+b)^2=4d-b^2.$$ Now, using elementary number theory we can find $(x_0,y_0).$\\

{\bf{C. Runge's Condition.}}\label{appendixC}\\
Let $F\in {\mathbb{Z}}[x,y]$ be an irreducible polynomial and 
$$F(x,y)=\sum_{0\le i\le m,\ 0\le j\le n} a_{ij}x^{i}y^{j}=0.$$
If one of the following conditions does not hold, then we say that {\it{Runge's condition is satisfied}} and we can apply the Runge's finiteness  result \cite{Runge}.\\
$({\bf i}).$ $a_{in}=a_{mj}=0,$ for all non zero indexes $i,j$\\
$({\bf ii}).$ $a_{ij}=0,$ for all $i,j$ such that $in+jm\ge mn.$\\
$({\bf iii}).$ The leading term  
$$\sum_{in+mj=mn} a_{ij}x^{i}y^{j}$$ is constant power of an irreducible polynomial in ${\mathbb{Z}}[x,y].$\\
$({\bf iv}).$ The algebraic function $y = y(x)$ defined by the equation $F (x, y) = 0,$ has only one class of conjugate Puiseux expansions.\\\\
The curves we study in the present paper do not satisfy condition $({\bf iii}),$
so they satisfy Runge's condition. Indeed, the leading terms are one of the following form: $y^2-x^4,$ $y^2-c^2x^4$ and $c(y^2-x^4),$ which are reducible polynomials in ${\mathbb{Z}}[x,y].$

\end{document}